\documentclass[11pt,a4paper,twoside,psfull,reqno]{amsart}

\usepackage[T2A]{fontenc}
\usepackage[utf8x]{inputenc}
\usepackage[toc,page]{appendix}
\usepackage[fit]{truncate}
\usepackage[margin=10pt,font={scriptsize,bf},textfont=bf,labelsep=endash]{caption}
\usepackage[top=1in,left=1in,right=1in,bottom=1in]{geometry}
\usepackage{amsmath,amsfonts,amscd,amssymb,latexsym,rotating,xcolor,pict2e,hyperref,truncate}

\theoremstyle{plain}
 
\newtheorem{thm}{Theorem}[section]
\newtheorem{lemma}[thm]{Lemma}

\theoremstyle{definition}
\newtheorem{definition}[thm]{Definition}

 \numberwithin{equation}{section}
\title[Discrete Fractional maximal operator in Lorentz spaces]{CHARACTERIZATION OF THE BOUNDEDNESS OF THE DISCRETE FRACTIONAL MAXIMAL OPERATOR BETWEEN WEIGHTED LORENTZ SEQUENCE SPACES}

\author{A.Gogatishvili, N.A.Bokayev, N.K.Kuzeubayeva}

\address{Amiran Gogatishvili, Institute of Mathematical of the  Czech Academy of Sciences, \v Zitn\'a~25, 115~67 Praha~1, Czech Republic}
\email{gogatish@math.cas.cz}
\address{Nurzhan Bokave,  L.N. Gumilyov Eurasian National University,  Satpaev Str. 2, 010008, Astana, Republic of Kazakhstan}
 \email{bokayev2011@yandex.ru, nurgul.kuzeubaeva@mail.ru }
 \address{Nurgul Kuzeubaev,  L.N. Gumilyov Eurasian National University,  Satpaev Str. 2, 010008, Astana, Republic of Kazakhstan}
 \email{nurgul.kuzeubaeva@mail.ru }
\begin{document}
\begin{abstract}
    
 In this paper, we give the complete characterization of the boundedness of the discrete fractional maximal operator $M_\gamma $, defined on $\mathbb{Z}^n$,  $n\in\mathbb{N}$,  $\gamma\in [0,n)$,  between the classical Lorentz sequence spaces $\lambda_p(v)$ and $\lambda_q(w)$, where $0<p,q<\infty$,  $\{v(m)\}$ and $\{w(m)\}$ are non-negative sequences defined on $\mathbb{N}$. We first obtain a sharp upper estimate for the nonincreasing rearrangement of the discrete fractional maximal operator $M_\gamma x$, with a supremal operator involving a discrete Hardy operator and the nonincreasing rearrangement of the sequence $\{x(k)\}$, and we show that this estimate is sharp by showing a lower estimate for special sequences. Using this result, we can reduce the characterization of the boundedness of the discrete fractional maximal operator $ M_\gamma$ between the discrete weighted Lorentz sequence spaces to the boundedness of the discrete supremum operator between weighted Lebesgue sequence spaces, restricted to nonincreasing sequences. This problem was investigated in the recent paper by the authors, which enables us to obtain the final characterization.
 \end{abstract} 

\keywords{discrete Fractional maximal operator, weighted Lorentz sequence space, supremal operator, Hardy operator.}

\subjclass[2020]{46B30, 46B85}

\maketitle

\section{Introduction}

We start with the definition of the decreasing rearrangement of the real sequences. We will follow the book [1] reformulating the definitions for the special case of the $\sigma$-finite measure space $(\mathbb{Z}^n, \mathcal{P}(\mathbb{Z}^n),m)$ where $m$ is the usual counting measure, which is defined as follows: for a finite set $A$, $m(A)$ is the number of its elements, and for infinite $A$ we have that $m(A) = \infty$.
\begin{definition}
 [Distribution function and decreasing rearrangement]\label{def:rearrangement} 

 Consider the measure space $({\mathbb Z}^n,\mathcal{P}(\mathbb{Z}^n),m)$. For any  bounded sequence $\{x(k)\}_{k\in{\mathbb Z}^n} $ of real numbers,  we define its distribution function $m_x: [0,\infty) \rightarrow [0,\infty]$ in the following way:
\[ m_x(\omega) = m\{k \in \mathbb{Z}^n: |x(k)| > \omega\}, \quad \omega>0. 
\]
\end{definition}
Then for $t \ge 0$ we define the decreasing rearrangement:
\[ x^*(t) = \inf\{\omega > 0: m_x(\omega) \le t\} \]
and the decreasing rearrangement $\{x^*(i)\}_{i=1}^\infty$ of the sequence 
$\{x(i)\}_{k\in \mathbb{Z}^n}$ as
\[ x^*(i) = x^*(t(i)) = \inf\{\omega > 0: m_x(\omega) \le t(i)\} \]
for any $i-1 \le t(i) < i, \quad i\in \mathbb{N}$.

It should be noticed that an alternative equivalent definition of the decreasing rearrangement is possible.

\[x^*(i)=\sup_{E\subset\mathbb{Z}^n, m(E)=i}\min_{k\in E}|x(k)|\quad i\in \mathbb{N}.\]

We use the convention here that $\inf\emptyset=\infty$. Thus, if $m_x(\omega)>i$ for all $\omega \ge 0$, then $x^*(i)=\infty$.

We denote  $x^{**}(k)$ as the average $
\{x^*(k)\}_{k\in\mathbb{N}}$:  
\[
x^{**}(k) = \frac{1}{k} \sum_{i=1}^k x^*(i),\quad k\in  \mathbb{N}.
\]
We will use the following conventions. We write $A \lesssim B$ if there exists a constant $C\in (0, \infty)$ independent of appropriate quantities such that $A \leq C B$. We write $A \approx B$ if we have both $A \lesssim B $ and $B \lesssim A$.  We put $1/ (\pm \infty) =0$, $0/0 =0$, $0\cdot (\pm \infty)=0$. Moreover, we omit arguments of integrands as well as differentials in integrals when appropriate in order to keep the exposition as short as possible. We will denote by $\ LHS(*)$ and $\ RHS(*)$, the left-hand side and right-hand side of an inequality $(*)$, respectively.
Note that since the counting measure $m$ can only take integer values (or infinity), it does not matter which exact $t_k$ is chosen for the given $k$, as long as it is in the aforementioned interval.

In this paper, an upper bound is obtained for a nonincreasing rearrangement of a discrete fractional-maximal operator through a certain supremal operator.    
 We assume throughout that $\textbf{w}$ and $\textbf{v}$ are sequences of nonnegative terms. Their partial sums are denoted by
\[
W(m) =\sum^m_{k=1}w(k), \quad V(m) =\sum^m_{k=1}v(k), \quad m\in {\mathbb N}.
\] 
In the case where the series converges, or we consider the limit of the sequence of partial sums, we write:
\[
V(\infty) = \lim_{m \to \infty} V(m).
\]
\begin{definition} 
Given a fixed sequence $w(k)_{k\in\mathbb{N}}$ of nonnegative real numbers not identically equal to zero and a fixed $p \in (0,\infty)$.
The classical Lorentz sequence space
$ \lambda_p(w)$  is the set
of all complex sequences $x = {x(k)}_{k\in \mathbb{Z}^n}$ such that $\lVert x \rVert_{\lambda_p(w)} < \infty $,
where 

\[ \lVert x \rVert_{\lambda_p(w)} = \begin{cases}    
\left(\sum\limits_{k=1}^\infty (x^*(k))^p \cdot w(k)\right)^{\frac{1}{p}},\quad &0<p<\infty,\\
\sup\limits_{k\ge 1} x^*(k)\cdot w(k), \quad &p=\infty.
\end{cases}\]
 Notice that the $\|\cdot\|_{\lambda_p(w)}$ does not necessarily have to be a norm. 
\end{definition}
\begin{definition}

\[ \lambda_p(w) = \{x \in \mathbb{Z}^n: \, \lVert x \rVert_{\lambda_p(w)} < +\infty\}. \]
\end{definition}
For  $m=(m_1, m_2, \cdots , m_n) \in \mathbb{Z}^n$   and   $N\in \mathbb{N}$ we define the set
\begin{equation*}
S_{m,N}:=\{k\in\mathbb{Z}^n: ||k-m||_\infty \leq N\}, \quad 
\end{equation*}
where $||(x(1),x(2),\cdots, x(n))||_\infty:=\max\{|x(i)|: 1\leq i \leq n\}$
for each $(x(1),x(2), \cdots,x(n))\in\mathbb{Z}^n$. Then  $| S_{m,N}|=(2N+1)^n$ is the cardinality of the set $S_{m,N}$.
Let $\gamma \in [0, n)$, 
$x=(x(1),x(2),\cdots,x(k))\in\mathbb{Z}^n$. The discrete fractional maximal operator $M_\gamma$ is defined by

\begin{equation*} 
(M_{\gamma}x)(m)=\sup_{N\in\mathbb{N}}{{|S_{m,N}|^{\frac{\gamma}{n}-1}}}\displaystyle \sum\limits_{k\in S_{m,N}} |x(k)|, \,\, m\in \mathbb{Z}^n.
\end{equation*}

The aim of this paper is to establish a condition for the boundedness of the discrete fractional maximal operator from $\Lambda_p(v)$ to $\Lambda_q(w)$.

\section{Preliminaries}
\begin{thm}{\cite[Chapter-2, Theorem~2.2]{BS}}\label{HL}
If $x=\{x(m)\}_{m\in\mathbb{Z}^n}$ and $b=\{b(m)\}_{m\in\mathbb{Z}^n}$ are complex sequences, then
\[ \sum_{m\in\mathbb{Z}^n} x(m)b(m) \le \sum_{k=1}^\infty x^*(k)b^*(k). \]
\end{thm}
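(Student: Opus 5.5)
We will prove the Hardy--Littlewood inequality of Theorem~\ref{HL}, interpreting the left-hand side with absolute values, $\sum_m |x(m)b(m)|$, since that is the quantity that needs bounding; the complex case follows from $|\sum x b|\le \sum |x||b|$. Since $|x|$ and $x$ have the same distribution function, they have the same rearrangement, so we may assume $x,b\ge 0$. The plan is the layer-cake (distribution function) argument.

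First, write each nonnegative function as an integral of indicator functions:
\[
x(m)=\int_0^\infty \chi_{\{x>s\}}(m)\,ds,\qquad b(m)=\int_0^\infty \chi_{\{b>t\}}(m)\,dt .
\]
Applying Tonelli's theorem, which is legitimate because everything is nonnegative, we obtain
\[
\sum_{m\in\mathbb{Z}^n} x(m)b(m)=\int_0^\infty\!\!\int_0^\infty m\bigl(\{x>s\}\cap\{b>t\}\bigr)\,ds\,dt .
\]
We then use the trivial estimate
\[
m(E\cap F)\le \min\{m(E),m(F)\},
\]
with $E=\{x>s\}$ and $F=\{b>t\}$, so that $m(E)=m_x(s)$ and $m(F)=m_b(t)$.

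Second, we perform the same computation on the rearrangement side. Since $x^*$ is nonincreasing on $\mathbb{N}$, the set $\{i:x^*(i)>s\}$ is an initial segment $\{1,\dots,m_{x^*}(s)\}$. Likewise $\{i:b^*(i)>t\}=\{1,\dots,m_{b^*}(t)\}$. The intersection of two initial segments is again an initial segment, of cardinality exactly $\min\{m_{x^*}(s),m_{b^*}(t)\}$. Hence, by the same Tonelli computation,
\[
\sum_{k\ge1}x^*(k)b^*(k)=\int_0^\infty\!\!\int_0^\infty \min\{m_{x^*}(s),m_{b^*}(t)\}\,ds\,dt .
\]

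The key remaining step is the equimeasurability $m_{x^*}=m_x$ on $(0,\infty)$, and similarly for $b$. This is the main point needing care with the definition $x^*(i)=\inf\{\omega>0:m_x(\omega)\le t(i)\}$, $i-1\le t(i)<i$. We will verify that $x^*(i)>s$ holds if and only if $m_x(s)>i-1$, that is, $m_x(s)\ge i$. Right-continuity of $m_x$ ensures the infimum is attained in the sense $m_x(x^*(i))\le t(i)$, and the integrality of $m_x$ makes the choice of $t(i)$ irrelevant. Counting such $i$ gives
\[
m_{x^*}(s)=m_x(s),
\]
including the case $m_x(s)=\infty$. Comparing the two integrals then yields the inequality. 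An alternative route, if preferred, is to approximate $x$ and $b$ by finitely supported truncations and reduce to the finite rearrangement inequality; however, the layer-cake route avoids limiting arguments.
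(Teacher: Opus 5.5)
Your argument is correct, and it takes a different route from the one the paper relies on. The paper gives no proof of its own here; it quotes Bennett--Sharpley, Chapter~2, Theorem~2.2. There the inequality is proved by first taking $b$ to be a nonnegative simple function $\sum_j a_j\chi_{E_j}$ with nested sets $E_1\subset\cdots\subset E_N$. One then applies the lemma $\int_E|f|\,d\mu\le\int_0^{\mu(E)}f^*(s)\,ds$ to each layer, and passes to general $b$ via $b_j\uparrow b\Rightarrow b_j^*\uparrow b^*$ and monotone convergence.

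Your double layer-cake argument is symmetric in $x$ and $b$. It needs neither simple-function approximation nor the monotone continuity of ${}^*$. It reduces everything to two facts: the set inequality $m(E\cap F)\le\min\{m(E),m(F)\}$, and the fact that the superlevel sets of $x^*$ are initial segments of cardinality $m_x(s)$.

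In this paper's setting it has a further advantage: you work directly with the discrete definition $x^*(i)=\inf\{\omega>0: m_x(\omega)\le t(i)\}$. Citing Bennett--Sharpley literally requires identifying $\sum_k x^*(k)b^*(k)$ with $\int_0^\infty x^*(t)b^*(t)\,dt$. That means observing that $x^*(t)$ is constant on each $[i-1,i)$ because $m_x$ is integer-valued, which is exactly what your equivalence $x^*(i)>s\iff m_x(s)\ge i$ (via right-continuity and integrality of $m_x$) captures.

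The cited route's by-product is the estimate $\sum_{k\in Q}|x(k)|\le\sum_{i=1}^{m(Q)}x^*(i)$, which the paper uses in the proof of Theorem~\ref{th4}. It also follows from your inequality by taking $b=\chi_Q$, since then $b^*=\chi_{\{1,\dots,m(Q)\}}$.
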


\begin{lemma}
Let $n \in \mathbb{N}$ and $\gamma \in [0, n)$, then the following inequalities hold:
\begin{align} \sup_{k\in \ \mathbb{N}} (M_{\gamma}x)^*(k) &\leq D_3 \sup_{k \in \ \mathbb{N}} k^{\frac{\gamma}{n}} x^{*}(k),    \label{n gamma}\\
\sup_{k\in \ \mathbb{N}} k^{1-\frac{\gamma}{n}} (M_{\gamma}x)^*(k) &\leq D_4 \sum_{k\in \mathbb{Z}^n} {|x(k)|},   \label{1, n(n-gamma)} 
\end{align}

where $D_3$ and $D_4$ are positive constants depending only on $n$ and $\gamma$. 
\end{lemma}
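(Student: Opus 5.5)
Both inequalities are endpoint estimates: \eqref{n gamma} is the $L^{n/\gamma,\infty}\to L^\infty$ bound and \eqref{1, n(n-gamma)} is the weak-type $(1,\frac{n}{n-\gamma})$ bound. I would prove each pointwise or via a covering argument on $\mathbb{Z}^n$, taking the rearrangement inequality of Theorem~\ref{HL} as the main tool for the first one.

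For \eqref{n gamma}, note first that $\sup_k (M_\gamma x)^*(k)=\sup_m (M_\gamma x)(m)$. So it suffices to bound $(M_\gamma x)(m)$ uniformly in $m$. Fix $m$ and $N$, set $K=|S_{m,N}|=(2N+1)^n$, and apply Theorem~\ref{HL} with $b=\chi_{S_{m,N}}$, so that $b^*=\chi_{\{1,\dots,K\}}$. This gives
\[
\sum_{k\in S_{m,N}}|x(k)|\le \sum_{i=1}^K x^*(i)\le A\sum_{i=1}^K i^{-\gamma/n}\le \frac{A}{1-\gamma/n}\,K^{1-\gamma/n},
\]
where $A=\sup_i i^{\gamma/n}x^*(i)$. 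Multiplying by $K^{\gamma/n-1}$ and taking the supremum over $N$ and $m$ yields \eqref{n gamma} with $D_3=\frac{n}{n-\gamma}$; here $\gamma<n$ is what makes the sum comparable to $K^{1-\gamma/n}$.

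For \eqref{1, n(n-gamma)}, I would first reduce to a weak-type estimate. It suffices to show that for every $\lambda>0$,
\[
E_\lambda=\{m:(M_\gamma x)(m)>\lambda\}\quad\text{satisfies}\quad |E_\lambda|\le C\bigl(\|x\|_1/\lambda\bigr)^{n/(n-\gamma)}.
\]
Indeed, $(M_\gamma x)^*(k)>\lambda$ forces $|E_\lambda|\ge k$; taking $\lambda$ slightly below $(M_\gamma x)^*(k)$ then gives $k\le C(\|x\|_1/(M_\gamma x)^*(k))^{n/(n-\gamma)}$, which is \eqref{1, n(n-gamma)} with $D_4=C^{1-\gamma/n}$.

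To bound $|E_\lambda|$, write $\|x\|_1=\sum|x|$; if it is infinite there is nothing to prove. Each $m\in E_\lambda$ has a cube $S_{m,N_m}$ with
\[
\sum_{S_{m,N_m}}|x|>\lambda |S_{m,N_m}|^{1-\gamma/n}.
\]
Consequently $|S_{m,N_m}|<(\|x\|_1/\lambda)^{n/(n-\gamma)}$, so the radii are uniformly bounded. Work first on a finite subset $F\subset E_\lambda$, so that only finitely many cubes are involved. Apply the Vitali covering lemma for $\ell^\infty$-cubes, selecting greedily by largest radius, to obtain disjoint cubes $S_j$ with $F\subset\bigcup 3S_j$, where $3S_j$ is the cube with the same centre and radius $3N_j+1$, of cardinality at most $C_n|S_j|$. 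Then
\[
|F|\le C_n\sum_j|S_j|,\qquad |S_j|^{1-\gamma/n}<\lambda^{-1}\sum_{S_j}|x|.
\]
Using $\sum_j a_j^{\theta}\ge(\sum_j a_j)^{\theta}$ for $\theta=1-\gamma/n\le1$, we get
\[
\Bigl(\sum_j|S_j|\Bigr)^{1-\gamma/n}\le\sum_j|S_j|^{1-\gamma/n}<\lambda^{-1}\|x\|_1.
\]
Hence $|F|\le C_n(\|x\|_1/\lambda)^{n/(n-\gamma)}$, and letting $F$ exhaust $E_\lambda$ gives the claim.

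The main obstacle I expect is the discrete Vitali step: checking that the enlarged cube of a selected cube contains every cube it meets whose radius is no larger, with the correct integer radius, and that the cardinality ratio is bounded by a constant depending only on $n$. For the first attempt I would avoid Vitali altogether: for $\gamma=0$ I would compare $M_0$ with the continuous Hardy–Littlewood maximal function of the step function $\sum x(k)\chi_{k+[-1/2,1/2)^n}$ and quote the classical weak $(1,1)$ bound. The concavity trick above then upgrades the $\gamma=0$ case to general $\gamma$.
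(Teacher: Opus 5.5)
Your proposal is correct and follows the paper's proof essentially step for step. For the first inequality both use Theorem~\ref{HL} together with $\sum_{i=1}^{K} i^{-\gamma/n}\le \frac{n}{n-\gamma}K^{1-\gamma/n}$. For the second both use Vitali on a finite subset of $E_\lambda$ plus the subadditivity of $t\mapsto t^{1-\gamma/n}$, which gives the same $D_4=3^{n-\gamma}$. You handle the finite-subset exhaustion and the passage from the distribution function to $(M_\gamma x)^*$ more carefully than the paper does.

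The discrete Vitali step you worry about is harmless: a cube $S_{m',N'}$ meeting $S_{m,N}$ with $N'\le N$ lies in $S_{m,3N}$, and $(6N+1)^n\le 3^n(2N+1)^n$. In your fallback route, however, the passage from $\gamma=0$ to general $\gamma$ would not come from the concavity inequality, since that needs disjoint cubes. It would go through the pointwise bound $M_\gamma x\le (M_0x)^{1-\gamma/n}\|x\|_1^{\gamma/n}$.
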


\begin{proof} 
Using the definition of $M_\gamma$ and Theorem~\ref{HL} we get 

\begin{align*}
  \sup_{k\in\ \mathbb{N}}(M_{\gamma}x)^*(k) &=\sup_{m\in\mathbb{Z}^n}(M_{\gamma}x)(m)\\
  &=\sup_{m\in\mathbb{Z}^n}\sup_{N\in\mathbb{N}}{{|S_{m,N}|^{\frac{\gamma}{n}-1}}}\displaystyle\sum\limits_{k\in S_{m,N}} |x(k)|\\
  &\le\sup_{m\in\mathbb{Z}^n}\sup_{N\in\mathbb{N}}{{|S_{m,N}|^{\frac{\gamma}{n}-1}}}\displaystyle\sum\limits_{i=1}^{|S_{m,N}|} x^*(i)\\
  &= \sup_{m\in\ \mathbb{Z}^n}\sup_{N\in\mathbb{N}}{{|S_{m,N}|^{\frac{\gamma}{n}-1}}}\displaystyle\sum\limits_{i=1}^{|S_{m,N}|}i^{-\gamma/n} i^{\gamma/n}x^*(i)\\
  &\le\sup_{m\in\mathbb{Z}^n}\sup_{N\in\mathbb{N}}{{|S_{m,N}|^{\frac{\gamma}{n}-1}}}\displaystyle\sum\limits_{i=1}^{|S_{m,N}|}i^{-\gamma/n}\sup_{i\in\mathbb{N}}i^{\gamma/n}x^*(i)\\
  & \le D_3\sup_{i\in\mathbb{N}}i^{\gamma/n}x^*(i)
\end{align*}

the estimate \eqref{n gamma} proved.

Now  we prove the estimate \eqref{1, n(n-gamma)}. 
Fix a number $\lambda>0$ and set

\[
E_\lambda = \left\{ k \in \mathbb{Z}^n : (M_\gamma x)(k) > \lambda \right\}.
\]
We want to estimate
\[
m(E_\lambda)^{1-\frac{\gamma}{n}} = m(\left\{ k \in \mathbb{Z}^n : (M_\gamma x)(k) > \lambda \right\})\le \frac{\widetilde{C}_4}{\lambda}\sum_{k\in \mathbb{Z}^n} |x(k)|.
\]
For each $ k \in E_\lambda $, choose  $N_k $ such that
\[
\frac{1}{|S_{k,N_k}|^{1 - \gamma/n}} \sum_{m\in S_{k,N_k}} |x(m)| > \lambda.
\]\
Thus \( E_\lambda \subset \cup_{k\in E_\lambda} S_{k, N_k} \).
Take any finite set $E\subset E_\lambda$, and apply the Vitali covering lemma 
{\cite[Chapter-1, Lemma, p.~9]{St}}. So there exists a disjoint subcollection  \( \{ S_j=S_{k_j, N_{k_j}} \} \) selected by the Vitali covering lemma, such that:
\[m(E_\lambda)\le 3^n \sum_j m(S_j)
\]
and
\begin{align*}
m(E_\lambda)^{1-\frac{\gamma}{n}}&\le 3^{n-\gamma} \sum_j m(S_j)^{1-\frac{\gamma}{n}}\notag\\
&\le 
 3^{n-\gamma}\lambda^{-1}\sum_j \sum_{m \in S_j} |x(m)| \notag \\
 &\le 
 3^{n-\gamma}\lambda^{-1}\sum_{m\in \cup_j S_j} |x(m)|\notag\\
   & \le 
 3^{n-\gamma}\lambda^{-1}\sum_{m\in \mathbb{Z}^n} |x(m)|,
 \end{align*}
so
\[
\sup_{\lambda>0}\lambda m(E_\lambda)^{1-\frac{\gamma}{n}} \leq 3^{{n-\gamma}} \sum_{m\in \mathbb{Z}^n} |x(m)|
\]

\[\sup_{m\in \mathbb{N}} m^{1-\frac{\gamma}{n}} (M_{\gamma}x)^*(m)=\sup_{\lambda>0}\lambda m(E_\lambda)^{1-\frac{\gamma}{n}}\leq 3^{{n-\gamma}} \sum_{m\in \mathbb{Z}^n} |x(m)|\]
which is an estimate \eqref{1, n(n-gamma)} with  $D_4=3^{n-\gamma}$.
\end{proof}











\section{Main results}

In the following theorem, an upper bound estimate for the nonincreasing rearrangement of the discrete fractional maximal operator is obtained.

Having defined the $\lambda_p(w)$ norm, we can now move on to the sequence spaces induced by that norm. For the definition to make sense, we only take sequences with finite norm.

\begin{thm} \label{th4}
 Let $n \in \mathbb{N}$ and $\gamma \in [0, n)$. Then there exists a positive constant $D_1$, depending only on $n$ and $\gamma$, such that
\begin{equation} \label{eq1}
(M_\gamma x)^*(m) \leq D_1 \sup_{m\leq k<\infty} \ k^{\gamma/n} x^{**}(\ k), \quad m \in  \mathbb{N}     \end{equation}
for any sequence $x=\{x(m)\}_{m\in\mathbb{Z}^n}$. 
\end{thm}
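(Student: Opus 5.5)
We follow the standard splitting argument (as in the continuous case of Cianchi--Kerman--Opic--Pick), using the two endpoint estimates of the preceding Lemma. Fix $m\in\mathbb{N}$ and a sequence $x$; we may assume the right-hand side of \eqref{eq1} is finite. Let $E\subset\mathbb{Z}^n$ be a set of $m$ points on which $|x|$ takes its $m$ largest values, so that $\{|x(k)|\}_{k\in E}$ rearranges to $x^*(1),\dots,x^*(m)$. We decompose $x=x_0+x_1$ with $x_0=x\chi_E$ and $x_1=x\chi_{\mathbb{Z}^n\setminus E}$. Then $x_0^*(i)=x^*(i)$ for $i\le m$ and $x_0^*(i)=0$ otherwise, while $x_1^*(i)=x^*(m+i)\le x^*(m)$, i.e. $x_1^*\le \min\{x^*(m),x^*\}$. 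Since $M_\gamma$ is sublinear and $(f+g)^*(2m-1)\le f^*(m)+g^*(m)$, we get
\[
(M_\gamma x)^*(2m)\le (M_\gamma x)^*(2m-1)\le (M_\gamma x_0)^*(m)+(M_\gamma x_1)^*(m).
\]
At the end we pass from $2m$ to $m$, using the monotonicity of $(M_\gamma x)^*$ and of the right-hand side in $m$. Writing $m'=\lceil m/2\rceil$, we have $(M_\gamma x)^*(m)\le (M_\gamma x)^*(2m'-1)$, and $\sup_{k\ge m'}\le C\sup_{k\ge m}$ because, for $m'\le k<m$, $k^{\gamma/n}x^{**}(k)\le 2\,m^{\gamma/n}x^{**}(m)$ (using $k x^{**}(k)\le m x^{**}(m)$ and $k\ge m/2$, which gives $x^{**}(k)\le (m/k)\,x^{**}(m)\le 2x^{**}(m)$).

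\emph{The term $x_0$.} Estimate \eqref{1, n(n-gamma)} gives
\[
(M_\gamma x_0)^*(m)\le D_4\, m^{\gamma/n-1}\sum_{i=1}^m x^*(i)=D_4\, m^{\gamma/n}x^{**}(m),
\]
which is bounded by the right-hand side of \eqref{eq1}.

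\emph{The term $x_1$.} Here we use \eqref{n gamma}:
\[
(M_\gamma x_1)^*(m)\le D_3\sup_{i} i^{\gamma/n}x_1^*(i).
\]
We split the supremum into $i\le m$ and $i>m$.
\begin{itemize}
\item For $i\le m$, we have $i^{\gamma/n}x_1^*(i)\le m^{\gamma/n}x^*(m)\le m^{\gamma/n}x^{**}(m)$.
\item For $i>m$, we have $x_1^*(i)=x^*(m+i)\le x^*(i)\le x^{**}(i)$, so $i^{\gamma/n}x_1^*(i)\le \sup_{k\ge m} k^{\gamma/n}x^{**}(k)$.
\end{itemize}
Combining the two terms gives \eqref{eq1} at the index $2m-1$, with constant $D_3+D_4$. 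The reduction above then yields \eqref{eq1} for every $m$ with $D_1=2(D_3+D_4)$.

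\emph{Main obstacle.} The delicate points are the rearrangement bookkeeping rather than the analysis. When there are ties in the values of $|x|$, the set $E$ must still be chosen so that $x_1^*\le x^*(m)$. One must also handle the possibility that $x^*$ is infinite or non-attained; this case is trivial, since the right-hand side is then infinite. Finally, the index shift from $2m-1$ to $m$ requires care. The analytic content is entirely contained in the two endpoint estimates of the preceding Lemma.
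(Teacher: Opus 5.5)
Your argument is correct and is essentially the paper's proof: you split $x$ at height $x^*(m)$ into a piece supported on at most $m$ points, handled by the weak-type bound \eqref{1, n(n-gamma)} and Hardy--Littlewood, and a piece dominated by $\min\{x^*(m),|x|\}$, handled by \eqref{n gamma}. The paper cuts by level rather than by a set ($y_k=\max\{|x|-x^*(k),0\}\operatorname{sgn}x$, $z_k=\min\{|x|,x^*(k)\}\operatorname{sgn}x$) and uses $(M_\gamma x)^*(k)\le (M_\gamma y_k)^*(k)+\sup_i (M_\gamma z_k)^*(i)$, so it needs neither a choice of $E$ nor your index doubling; you could drop the doubling too, since \eqref{n gamma} already bounds $\sup_i (M_\gamma x_1)^*(i)$.

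One correction to your edge-case remark. For $\gamma=0$, a non-attained $x^*(m)$ does not make the right-hand side infinite. You can still cover this case by taking $E$ to be any $m$-point set containing $\{|x|>x^*(m)\}$, which has at most $m-1$ points. That suffices because all you actually use is $|x_1|\le\min\{x^*(m),|x|\}$ and $\sum_{k\in E}|x(k)|\le\sum_{i=1}^m x^*(i)$.
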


\begin{proof}[{\bf Proof of Theorem~\ref{th4}.}]

 Fix $k\in \mathbb{N}$ and let $x\in l^1_{loc} (\mathbb{Z}^n)$. We may assume that

\begin{equation}
  \sup_{m\leq k<\infty}k^{\gamma/n-1}\sum_{j=1}^{k}x^{*}(j)< \infty    \label{eq6}\end{equation}

otherwise \eqref{eq1} holds trivially. Then by the Hardy-Littlewood inequality \eqref{HL} 

$$\sum\limits_{k\in Q}|x(k)|\leq \sum\limits^{k}_{i=1} x^{*}(i)< \infty$$
for every set $Q\subset \mathbb{Z}^n$ of measure at most $k$. In particular, if we put

$$E=\{m\in \mathbb{Z}^n: |x(m)|>x^*(k)\},$$

then $m(E)\leq k$ \cite[Chapter 2, (1.18)]{BS}, and so $\sum\limits_{k\in E}|x(k)|<\infty$. We define the sequences:

\begin{align} 
y_{k}(m)&= \max\{{|x(m)|-x^*(k),0}\}\operatorname{sgn}{x(m)},\;\;&m\in\mathbb{Z}^n,\label{yk}\\
z_{k}(m)&= \min\{{|x(m)|, x^*(k)}\}\operatorname{sgn}x(m),\;\;&m\in \mathbb{Z}^n.\label{zk}
\end{align}

Then $x=y_k+z_k$ and

\begin{align} 
y^*_{k}{(i)}&= \chi_{(0;k)\cap\mathbb{N}}{(x^*(i)-x^*(k))}, \;\; &i\in \mathbb{N}\label{yk*}\\
z^*_{k}{(i)}&= \min {\{x^*(i),x^*(k)\}}, \;\; &i\in \mathbb{N}\label{zk*}
\end{align}

whence from \eqref{yk*}:

\begin{equation}\sum_{m\in\mathbb{Z}^n}|y_k(m)|=\sum\limits^{\infty}_{i=1} y^{*}_{k}(i)=\sum\limits^{k}_{i=1} \Big( x^{*}(i)-x^{*}(k)\Big)\leq\sum\limits^{k}_{i=1} x^{*}(i).  \label{sumym}
\end{equation}

From the inequality \eqref{1, n(n-gamma)} it follows that

\begin{equation}(M_\gamma y_k)^*{(i)}\leq D_4{i^{\gamma/n-1}}\sum_{m\in \mathbb{Z}^n}|y_k(m)|\leq D_4{i^{\gamma/n-1}}\sum_{i=1}^k x^*(i), \;\; i\in  \mathbb{N}. \label{myk<}
\end{equation}

By \eqref{n gamma} and \eqref{zk*} we have:

\begin{eqnarray*}
  (M_\gamma z_k)^*{(i)} &\leq& D_3\sup_{1\le i<\infty}{i^{\gamma/n}}z_k^*{(i)}\\
  &=& D_3\max\Big\{\sup_{0<i<k}{i^{\gamma/n}} x^*{(k)}, \sup_{k\leq i<\infty}{i^{\gamma/n}} x^*{(i)}\Big\} \\
  &\approx& D_3\max\Big\{k^{\gamma/n}x^*(k), \sup_{k\leq i<\infty}{i^{\gamma/n}}x^*{(i)}\Big\} \\
  &\leq & D_3\sup_{k\leq i<\infty}{i^{\gamma/n}}x^*{(i)}.
\end{eqnarray*}

Hence

\begin{equation}\sup_{0<i<\infty}(M_\gamma z_k)^*{(i)}\leq D_3\sup_{k\leq i<\infty}{i^{\gamma/n}}x^*{(i)}, \label{mzk<} \end{equation}
\


$$\big(M_{\gamma}x\big)^*{(k)}\leq \big(M_{\gamma}y_k\big)^*{({k})}+\sup_{0<i<\infty}\big(M_{\gamma}z_k\big)^*{({i})}.$$

By using, \eqref{myk<} and \eqref{mzk<} we get

\begin{eqnarray*}
\big(M_{\gamma}x\big)^*{(k)}&\leq& D_4{k^{\gamma/n-1}}\sum_{i=1}^k x^*(i)+ D_3\sup_{k\le i<\infty} i^{\gamma/n}x^*{(i)} \\
&\leq& (D_4+D_3)\sup_{k\le i<\infty}{i^{\gamma/n}}x^{**}{(i)}\\
&=&D_1\sup_{k\le i<\infty}{i^{\gamma/n}}x^{**}{(i)},
\end{eqnarray*}
with $D_1=D_3+D_4$.
\end{proof}

\begin{thm} \label{th5}
Let  us $n \in \mathbb{N}$ and $\gamma \in [0, n)$. Inequality \eqref{eq1} in Theorem~\ref{th4}  is sharp in
the sense that for any positive nonincreasing sequence $x=\{x(m)\}_{m\in \mathbb{N}}$ there
exists a sequence  $y=\{y(k)\}_{k\in\mathbb{Z}^n}$ such that  $ y^*(k)=x(k)$, $k\in \mathbb{N}$ and the inequality
holds:
\begin{align} \label{eq2}
(M_\gamma y)^*(m) &\geq D_2 \sup_{m\leq k<\infty} \ k^{\gamma/n} y^{**}(\ k)\notag\\
&= D_2\sup_{m\leq k<\infty} k^{\frac{\gamma}{n}-1}\sum^k_{i=1} x(i), \quad {m, k\in \mathbb{N}},   
\end{align}

  where $D_2$ - is a positive constant depending only on $n$ and  $\gamma$. 
  \end{thm}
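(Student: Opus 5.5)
Given a positive nonincreasing $x$ on $\mathbb{N}$, I will build $y$ by placing the values $x(1),x(2),\dots$ on $\mathbb{Z}^n$ in order of increasing distance from the origin. Concretely, enumerate $\mathbb{Z}^n$ as $k_1=0,k_2,k_3,\dots$ so that $\|k_j\|_\infty$ is nondecreasing in $j$; then every cube $S_{0,N}$ is exactly the set $\{k_1,\dots,k_{(2N+1)^n}\}$. Put $y(k_j)=x(j)$. Since $x$ is positive and nonincreasing and the enumeration is a bijection, $y^*(j)=x(j)$ for all $j$, and the equality in \eqref{eq2} is just the definition of $y^{**}$.

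Next I bound $M_\gamma y$ from below pointwise. Fix $m\in\mathbb{Z}^n$ with $\|m\|_\infty\le N$. Then $S_{m,2N}\supset S_{0,N}$, and $|S_{m,2N}|=(4N+1)^n\le 2^n(2N+1)^n$. Since $\gamma/n-1<0$,
\[
(M_\gamma y)(m)\ge |S_{m,2N}|^{\frac{\gamma}{n}-1}\sum_{i=1}^{(2N+1)^n}x(i)\ge 2^{\gamma-n}\,K_N^{\frac{\gamma}{n}-1}\sum_{i=1}^{K_N}x(i),
\]
where $K_N=(2N+1)^n$. Hence $M_\gamma y\ge 2^{\gamma-n}F(K_N)$ on $S_{0,N}$, a set of cardinality $K_N$, with $F(k)=k^{\gamma/n-1}\sum_{i\le k}x(i)$. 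By the alternative description of the rearrangement as $\sup_{m(E)=i}\min_E$, this gives $(M_\gamma y)^*(i)\ge 2^{\gamma-n}F(K_N)$ for every $i\le K_N$.

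Finally I pass from the cube sizes $K_N$ to arbitrary $k\ge m$. Given $k\ge m$, choose the smallest $N\ge 1$ with $K_N\ge k$; then $K_N\le c_n k$ for a constant $c_n$, since $(2N+1)/(2N-1)\le 3$, so $c_n=3^n$ works. Because $x$ is nonincreasing and the sum grows, I get
\[
F(K_N)\ge K_N^{\frac{\gamma}{n}-1}\sum_{i\le k}x(i)\ge (3^nk)^{\frac{\gamma}{n}-1}\sum_{i\le k}x(i)=3^{\gamma-n}F(k).
\]
Since $m\le k\le K_N$, the previous step gives $(M_\gamma y)^*(m)\ge 2^{\gamma-n}3^{\gamma-n}F(k)$. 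Taking the supremum over $k\ge m$ yields \eqref{eq2} with $D_2=6^{\gamma-n}$.

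The only delicate point is the construction itself: making $y$ increase toward a common center, so that all cubes about the origin carry the largest values of $x$, and controlling the size mismatch between $S_{m,2N}$ and $S_{0,N}$. The last step also absorbs $N=0$, because $N\in\mathbb{N}$ starts at $1$ and $K_1=3^n\le 3^n k$.
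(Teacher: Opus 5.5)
Your proof is correct and follows essentially the same route as the paper. The paper also places the values of $x$ on the shells $\{\|k\|_\infty=m-1\}$ in order of increasing sup-norm, and compares $M_\gamma y(m)$ with averages over cubes centered at the origin. Your version makes explicit the two steps the paper leaves implicit: the recentering $S_{m,2N}\supset S_{0,N}$, which accounts for the paper's unspecified constant $c$, and the passage from the cube sizes $(2N+1)^n$ to arbitrary $k\ge m$. This gives the explicit constant $D_2=6^{\gamma-n}$.
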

  
\begin{proof}[\bf Proof of Theorem~\ref{th5}]
 
  Let for any   ${m\in \mathbb{N}}$ define

\begin{align*}
N_1&=\{1\},\\
N_{m}&:=\{i\in\mathbb{N}: i=(2m-3)^n+1, \cdots,  (2m-1)^n \}, \quad
m=2,3, \dots,
\end{align*} 

and

\begin{align*}
Q_{m}:=\{\bar{k}\in \mathbb{Z}^n: \max_{1\le i\le n} |k(i)|=m-1\}.
\end{align*}

It is clear that 
\begin{align*}\#\{N_1\}&=\#\{Q_1\}=1, \\
\intertext {and} \#\{N_m\}&=\#\{Q_m\} = (2m-1)^n-(2m-3)^n, \quad m=2,3,\cdots
\end{align*}
$$\mathbb{N}=\cup_{m\in\mathbb{N}}N_m, \quad \mathbb{Z}^n=\cup_{m\in\mathbb{N}}Q_m.$$
Let 
\[ \Theta_m: Q_m\to N_m, \quad m\in \mathbb{N}\]
be one-to-one functions.

Define $\Theta:\mathbb{Z}^n\to \mathbb{N}$
\[\Theta(k)=\Theta_m(k), \quad \text{if}\quad  k\in Q_m. \]
\[y(k)=x(\Theta(k)),\quad k\in \mathbb{Z}^n.\]
Then, for every $k,m\in \mathbb{Z}^n$ such that $\|k\|_\infty\ge \|m\|_\infty$, we have
\begin{align*} M_{\gamma}(y)(m)&\ge c|S_{0,\|k\|_\infty}|^{\frac{\gamma}{n}-1}\sum_{j\in S_{0,\|k\|_\infty}}|y(j)|\\
&=c\bigg(2\|k\|_\infty +1\bigg)^{\gamma -n}\sum_{i=1}^{\|k\|_\infty}\sum_{j\in Q_i}|y(j)|\\
&=c\bigg(2\|k\|_\infty +1\bigg)^{\gamma -n}\sum_{i=1}^{\|k\|_\infty}\sum_{j\in Q_i}|x(\Theta(j))|\\
&=c\bigg(2\|k\|_\infty +1\bigg)^{\gamma -n}\sum_{i=1}^{\|k\|_\infty} \sum_{m\in N_i}|x(m)|\\
&=c\bigg(2\|k\|_\infty +1\bigg)^{\gamma -n}\sum_{i=1}^{(2\|k\|_\infty+1)^n} |x(i)|.
\end{align*}
Consequently, 
\[ M_{\gamma}(y)(m)\ge c \sup_{j\ge (2\|m\|_\infty+1)^n} j^{\frac{\gamma}{n} -1}\sum_{i=1}^{j} |x(i)|.\]
Hence, \eqref{eq2} followed on taking rearrangements. 
\end{proof}

\textbf{Remark 1.} For the classical fractional maximal operator, similar statements are proven in \cite{CKOP}, and for the generalized fractional maximal operator, they are considered in \cite{BGA}. \\

From Theorem~\ref{th4} and Theorem~\ref{th5} follows  the following.

\begin{lemma}\label{th6} 
Let $n\in\mathbb{N}$,  $\gamma\in [0,n)$ and $0<q,p\leq\infty$. Assume that $\{v(m)\}$ and $\{w(m)\}$ are given nonnegative weight sequences.
Then the operator $M_{\gamma}$ is bounded from $\lambda_p(v)$ to $\lambda_q(w)$ ($M_{\gamma}: \lambda_p(v)\hookrightarrow \lambda_q(w)$) i.e.
\begin{equation}  \label{eq3}
\left(\sum_{k=1}^{\infty}
\left((M_\gamma y)^*(k)\right)^q w(k)\right)^{\frac{1}{q}} \le\widetilde{C}_1
\left(\sum^\infty_{k=1} (y(k)^*)^p v(k)\right)^{\frac{1}{p}}
\end{equation}
holds for any sequence $y=\{y(m)\}_{m\in\mathbb{Z}^n}$,
if and only if the following inequality 
\begin{equation}  \label{eq4}
\left(\sum_{m=1}^{\infty}
\left(\sup_{m<k} k^{\frac{\gamma}{n}-1}\sum^k_{i=1}x(i)\right)^q w(m)\right)^{\frac{1}{q}} \le \widetilde{C}_2
\left(\sum^\infty_{k=1} x(k) ^p v(k)\right)^{\frac{1}{p}}
\end{equation}
 holds  for all nonnegative, nonincreasing sequences $\{x(n)\}_{n\in \mathbb{N}}$. 
     Moreover, if   $\widetilde{C}_1$  and $\widetilde{C}_2$  are the best constants in~\eqref{eq3} and  ~\eqref{eq4}, respectively, then
$\widetilde{C}_1 \approx \widetilde{C}_2$ and the multiplicative constants in all the equivalences above depend only on $p,q$.
\end{lemma}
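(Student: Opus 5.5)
The plan is to derive each implication directly from Theorem~\ref{th4} and Theorem~\ref{th5}. Both theorems pass through the operator
\[
(Tx)(m):=\sup_{m\le k<\infty} k^{\frac{\gamma}{n}-1}\sum_{i=1}^k x(i),
\]
acting on nonnegative nonincreasing sequences $x$. In \eqref{eq4} the supremum is written over $m<k$. I will treat it as $m\le k$, which is the form produced by Theorems~\ref{th4} and~\ref{th5}. If the strict version is really intended, I will compare the two: the strict supremum is $(Tx)(m+1)\le (Tx)(m)$, and for the reverse direction one bounds $m^{\gamma/n-1}\sum_{i\le m}x(i)\lesssim (m+1)^{\gamma/n-1}\sum_{i\le m+1}x(i)$, with a constant depending only on $\gamma/n$. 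I note that a constant depending on $n,\gamma$ (through $D_1,D_2$) is unavoidable, even though the statement says only $p,q$.

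\textbf{Sufficiency (\eqref{eq4}$\Rightarrow$\eqref{eq3}).} Let $y$ be any sequence on $\mathbb{Z}^n$ with $\|y\|_{\lambda_p(v)}<\infty$, and put $x=y^*$, which is nonnegative and nonincreasing on $\mathbb{N}$. By Theorem~\ref{th4},
\[
(M_\gamma y)^*(m)\le D_1 (Ty^*)(m)\quad\text{for every } m.
\]
Raise this to the power $q$, multiply by $w(m)$, sum over $m$, and then apply \eqref{eq4} to $x=y^*$. This gives \eqref{eq3} with $\widetilde C_1\le D_1\widetilde C_2$. For $q=\infty$ or $p=\infty$, the same argument works with the sums replaced by suprema.

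\textbf{Necessity (\eqref{eq3}$\Rightarrow$\eqref{eq4}).} Fix a nonnegative nonincreasing $x$ on $\mathbb{N}$. Theorem~\ref{th5} is stated for positive $x$, so first assume $x>0$. Theorem~\ref{th5} then provides $y$ on $\mathbb{Z}^n$ with $y^*=x$ and
\[
(M_\gamma y)^*(m)\ge D_2 (Tx)(m).
\]
Hence
\[
\mathrm{LHS}\eqref{eq4}\le D_2^{-1}\|M_\gamma y\|_{\lambda_q(w)}\le D_2^{-1}\widetilde C_1\|y\|_{\lambda_p(v)}=D_2^{-1}\widetilde C_1\|x\|_{\ell^p(v)},
\]
so $\widetilde C_2\le \widetilde C_1/D_2$.

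\textbf{Main obstacle: reaching all nonnegative $x$.} It remains to pass from positive $x$ to all nonnegative $x$, including those that vanish eventually. I would try the approximation $x_\varepsilon=x+\varepsilon u$, where $u>0$ is nonincreasing and chosen so that $\sum u^p v<\infty$; for instance, $u(k)=2^{-k}\bigl(1+v(k)\bigr)^{-1/p}$ can be made nonincreasing by passing to running minima. Then $\|x_\varepsilon\|_{\ell^p(v)}\to\|x\|_{\ell^p(v)}$ as $\varepsilon\to0$, while $Tx_\varepsilon\ge Tx$ pointwise. Therefore
\[
\mathrm{LHS}\eqref{eq4}(x)\le \mathrm{LHS}\eqref{eq4}(x_\varepsilon)\le D_2^{-1}\widetilde C_1\|x_\varepsilon\|_{\ell^p(v)},
\]
and letting $\varepsilon\to0$ gives the inequality for $x$. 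As an alternative, one can check directly that the construction of Theorem~\ref{th5} never uses positivity: the lower bound via the cubes $S_{0,N}$ holds for any nonnegative $x$. Combining the two directions yields $\widetilde C_1\approx\widetilde C_2$.
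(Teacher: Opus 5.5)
Your proposal is correct and follows the paper's proof: sufficiency comes from Theorem~\ref{th4} applied with $x=y^*$, giving $\widetilde C_1\le D_1\widetilde C_2$, and necessity comes from the extremal sequence of Theorem~\ref{th5}, giving $\widetilde C_2\le D_2^{-1}\widetilde C_1$. The paper glosses over three points that you handle soundly: the strict versus non-strict supremum (via the factor $2^{1-\gamma/n}$), the passage from positive to nonnegative $x$, and the fact that the constants depend on $n,\gamma$ rather than on $p,q$.
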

\begin{proof}[{\bf Proof of Lemma~\ref{th6}}]
We will prove the equivalence of two statements:

    The boundedness of the operator $M_\gamma$ from $\lambda_p(v)$ to $\lambda_q(w)$, i.e., the fulfillment of inequality \eqref{eq3}.
    The fulfillment of inequality \eqref{eq4} for all nonnegative, nonincreasing sequences $\{x(n)\}_{n \in \mathbb{N}}$.

 $\rm \mathbf{\eqref{eq3} \Rightarrow \eqref{eq4}}.$

Assume that the operator $M_\gamma$ is bounded, i.e. the inequality \eqref{eq3} holds for any sequence $\{y(m)\}_{m\in \mathbb{Z}_n}$. We will show that this implies inequality \eqref{eq4} for nonincreasing sequences $x(k)_{k\in \mathbb{N}}$.
By Theorem~\ref{th5} for given nonincreasing sequence $(x(k)_{k\in \mathbb{N}}$ there exists multivariable sequence  $\{y(m)\}_{m\in \mathbb{Z}_n}$ such that $y^*(k)=x(k)$ $k\in \mathbb{N}$ and

\[ (M_{\gamma}(y))^*(k) \ge D_2 \sup_{k\le i< \infty} i^{\frac{\gamma}{n}-1}\sum_{j=1}^ix(j).
\]

Therefore, from inequality \eqref{eq3} follows 

\begin{align*} &\left(\sum_{k=1}^{\infty} \left(\sup_{k< i<\infty} i^{\frac{\gamma}{n} -1} \sum_{j=1}^i x(j)\right)^q w(k)\right)^{1/q}
\\
&\hskip+1cm\le D_2^{-1} \left(\sum_{k=1}^{\infty} (M_\gamma(y))^*(k))^q w(k)\right)^{\frac{1}{q}}\le D_{2}^{-1}\widetilde{C}_1  \left(\sum_{k=1}^{\infty} y^*(k)^p v(k)\right)^{\frac{1}{p}}\\
 &\hskip+1cm=D_{2}^{-1}\widetilde{C}_1  \left(\sum_{k=1}^{\infty} x(k)^p v(k)\right)^{\frac{1}{p}}.
\end{align*}
Therefore, we obtain  \eqref{eq4} with 
\[\widetilde{C}_2\le  D_{2}^{-1}\widetilde{C}_1.\]
 {\bf \eqref{eq4} $\Rightarrow$ \eqref{eq3}.}

Assume that inequality \eqref{eq4} holds for all nonnegative, nonincreasing sequences $\{x(n)\}$. We will show that this implies the boundedness of the operator $M_\gamma$, i.e., inequality \eqref{eq3} for any sequence $y$.

Using Theorem~\ref{th4} and \eqref{eq4} for nonincreasing sequence \newline $((y^*)(k))_{k\in \mathbb{N}}$, we obtain 
\begin{align*}  &\left(\sum_{k=1}^{\infty} ((M_{\gamma} y)^*(k))^qw(k)\right)^{\frac{1}{q}}\\
&\hskip+2cm\le D_1 
 \left(\sum_{k=1}^{\infty} \left(\sup_{k< i<\infty} i^{\frac{\gamma}{n}-1} \sum_{j=1}^i y^*(j)\right)^q w(k)\right)^{\frac{1}{q}}\\
 & \hskip+2cm\le D_1\widetilde{C}_2  \left(\sum_{k=1}^{\infty} \left( y^*(k)\right)^p v(k)\right)^{\frac{1}{p}}.
 \end{align*}
 Therefore, we obtain  \eqref{eq3} with 
\[\widetilde{C}_1\le  D_{1}\widetilde{C}_2.\]
\end{proof}

In the theorem below, we give a characterization for computing inequality \eqref{eq3} under various choices of the parameters p and q.

\begin{thm}   
    \label{thmmain}
Let $n\in \mathbb{N}$. $\gamma\in [0,n)$, and $0<p, q<\infty$. Assume that $\{w(m)\}$ and   $\{v(m)\}$  are given
nonnegative real numbers not identically equal to zero.  
Then the inequality 
\begin{equation*}  
\left(\sum_{k=1}^{\infty}
\left((M_\gamma y)^*(k)\right)^q w(k)\right)^{\frac{1}{q}} \le\widetilde{C}_1
\left(\sum^\infty_{k=1} (y(k)^*)^p v(k)\right)^{\frac{1}{p}}
\end{equation*}

holds  for any sequences $\{x(m)\}$ if and only if: 

{\rm (i)}				If $1 < p\le q$, then $\max\{C_0, C_1, C_2, C_3\} < \infty$, where
				\begin{align*}C_0& = \Bigg(\sum_{i=1}^\infty i^{\frac{\gamma q}{n}} w(i)\Bigg)^{\frac 1q}\Bigg(\sum_{i=1}^\infty v(i)\Bigg)^{-\frac{1}{p}},\\
                C_1& = \sup_{k\in\mathbb{N}} k^{\frac{\gamma}{n}-1}W(k)^{\frac 1q}                    \Bigg(\sum_{m=1}^k\frac{V(m)^{\frac{1}{1-p}} v(m+1)}{ m^{\frac{p}{1-p}} V(m+1)}\Bigg)^{\frac{p-1}{p}},\\
				    		C_2& = \sup_{k\in\mathbb{N}} \Bigg( \sum_{i= k}^\infty  i^{q(\frac{\gamma}{n}-1)} w(i) \Bigg)^{\frac 1q}\Bigg(\sum_{m=1}^k\frac{V(m)^{\frac{1}{1-p}} v(m+1)}{ m^{\frac{p}{1-p}} V(m+1)}\Bigg)^{\frac{p-1}{p}},\\
									C_3 &= \sup_{k\in \mathbb{N}}  \Bigg(\sum_{i=1} ^ k  w(i) i^{\frac{q\gamma}{n}} \Bigg)^{\frac 1q}
                                    V(k)^{-\frac{1}{p}}.
			\end{align*}
{\rm (ii)}
				If $p > 1$ and $q<p$, then $\max\{C_0,C_4, C_5 ,C_6 ,C_7\} < \infty$, where
					\begin{align*}
						C_4 & =\Bigg( \sum_{k=1}^\infty   \Bigg(\sum_{i=k}^\infty i^{q(\frac{\gamma}{n}-1)} w(i)\Bigg)^{\frac{q}{p-q}} k^{q(\frac{\gamma}{n}-1)} w(k)\Bigg(\sum_{m=1}^k\frac{V(m)^{\frac{1}{1-p}} v(m+1)}{ m^{\frac{p}{1-p}} V(m+1)}\Bigg)\Bigg)^{\frac{p-1}{p}},\\
			                     C_5&= \Bigg( \sum_{k=1}^\infty   W(k)^{\frac{q}{p-q}} w(k) \sup_{k\le i<\infty}i^{(\frac{\gamma}{n}-1)\frac{pq}{p-q}}\Bigg(\sum_{m=1}^k\frac{V(m)^{\frac{1}{1-p}} v(m+1)}{ m^{\frac{p}{1-p}} V(m+1)}\Bigg)^{\frac{(p-1)q}{p-q}} \Bigg)^\frac{p-q}{pq},\\
                            C_6 & = \Bigg( \sum_{k=1}^\infty   W(k)^{\frac{q}{p-q}}w(k) \sup_{k\le i<\infty} i^{\frac{\gamma}{n}\frac{pq}{p-q}}V(i)^{-\frac{q}{p-q}}\\
                            &\hskip+2cm\times\Bigg(\sum_{m\ge i}V(m+1)^{-1} V(m)^{\frac{1}{1-p}} v(m+1)\Bigg)\sp{\frac{(p-1)q}{p-q}} \Bigg)^\frac{p-q}{pq}\\
						C_7&= \Bigg( \sum_{k=1}^\infty  \Bigg(\sum_{i=1}^k w(i)  i^{\frac{q\gamma}{n}} \Bigg)^{\frac{q}{p-q}} w(k) 
                       \sup_{k\le i<\infty}i^{\frac{\gamma q}{n}}V(i)^{-\frac{q}{p-q}} \Bigg)^\frac{p-q}{pq}.
					\end{align*}
            {\rm (iii)}				If $0<p\le 1$ and $p\le q$, then $\max\{C_8, C_9\} < \infty$, where
					\begin{align*}
						C_8 &= \sup_{k\in\mathbb{N}} k^{\frac{\gamma}{n}-1} W(k)^{\frac 1q}\sup_{1\le i\le k}i V(i)^{-\frac{1}{p}}.\\
                        C_9 &= \sup_{k\in\mathbb{N}} \Bigg(\sum_{i=k}^\infty i^{q(\frac{\gamma}{n}-1)} w(i) \Bigg)^{\frac 1q}\sup_{1\le i\le k}i V(i)^{-\frac{1}{p}}.\\
					\end{align*}
            {\rm (iv)}
				If $0<q<p\le 1$, then then $\max\{C_{10}, C_{11}\} < \infty$, where
					\begin{align*}	
                    C_{10} &=  \Bigg( \sum_{k=1}^\infty  \Bigg(\sum_{i=k}^\infty w(i) i^{q(\frac{\gamma}{n}-1)} \Bigg)^{\frac{q}{p-q}}	k^{q(\frac{\gamma}{n}-1)} w(k) \sup_{1\le i\le k}  i^{\frac{pq}{p-q}}V(i)^{\frac q{q-p}}\Bigg)\\
						C_{11} &=\Bigg(\sum_{k=1}^\infty  W(k)^{\frac{q}{p-q}} w(k) \sup_{k\le i<\infty}i^{\frac{\gamma}{n}\frac{pq}{p-q}} V(i)^{\frac{q}{q-p}} \Bigg)^{\frac{p-q}{pq}}.
                    \end{align*}

    Moreover, if we denote by $\widetilde{C}$ the best constant in~\eqref{eq4}, then
\begin{equation*}
   \widetilde{C} \approx
        \begin{cases}
           \max\{C_0, C_1, C_2, C_3\} &\text{in the case \textup{(i)},}
                \\
         \max\{C_0, C_4, C_5, C_6, C_7\}  &\text{in the case \textup{(ii)},}
                \\
        \max\{C_{8}, C_{9}\}  &\text{in the case \textup{(iii)},}
                \\
     \max\{C_{10}, C_{11}\}  &\text{in the case \textup{(iv)}.}
        \end{cases}
\end{equation*}
Moreover, the multiplicative constants in all the equivalences above depend only on $p,q$.
\end{thm}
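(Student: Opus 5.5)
By Lemma~\ref{th6}, the boundedness in the statement is equivalent to inequality \eqref{eq4} for all nonnegative nonincreasing sequences $x$, with $\widetilde C_1\approx\widetilde C_2$. The proof therefore reduces to characterizing \eqref{eq4}. This is a weighted inequality for the discrete supremal operator
\[
(Tx)(m)=\sup_{m\le k<\infty} k^{\frac{\gamma}{n}-1}\sum_{i=1}^k x(i),
\]
acting from $\ell^p(v)$ to $\ell^q(w)$ and restricted to the cone of nonincreasing sequences. Such inequalities are treated in the authors' recent paper on discrete supremal operators. The plan is to put \eqref{eq4} into exactly the form covered there and then read off the conditions in cases (i)--(iv).

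First, split the supremum at the point where the weight changes monotonic behaviour. Write
\[
k^{\frac{\gamma}{n}-1}\sum_{i=1}^k x(i)=k^{\frac{\gamma}{n}-1}\sum_{i=1}^m x(i)+k^{\frac{\gamma}{n}-1}\sum_{i=m+1}^k x(i), \qquad k\ge m.
\]
The first term is maximized at $k=m$, since $\frac{\gamma}{n}-1<0$. Because $x$ is nonincreasing, the second term is bounded by $k^{\gamma/n}x(m+1)\lesssim \sup_{k\ge m} k^{\gamma/n}x(k)$. Conversely, both pieces are dominated by $Tx(m)$. Hence
\[
Tx(m)\approx m^{\frac{\gamma}{n}-1}\sum_{i=1}^m x(i)+\sup_{k\ge m}k^{\frac{\gamma}{n}}x(k),
\]
and \eqref{eq4} splits into two inequalities on the cone of nonincreasing sequences.

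The first is a Hardy inequality, $\bigl\|m^{\frac{\gamma}{n}-1}\sum_{i\le m}x(i)\bigr\|_{\ell^q(w)}\lesssim\|x\|_{\ell^p(v)}$. The second is a supremal inequality, $\bigl\|\sup_{k\ge m}k^{\gamma/n}x(k)\bigr\|_{\ell^q(w)}\lesssim\|x\|_{\ell^p(v)}$. Both are to be treated by the standard discrete reduction theorems for the cone of nonincreasing functions (Sawyer-type duality when $p>1$, and the sup-level characterization when $p\le1$).

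For the Hardy part restricted to nonincreasing $x$:
\begin{itemize}
\item Sawyer duality for $p>1$ yields the conditions built from the expression $\sum_{m\le k}V(m)^{\frac1{1-p}}v(m+1)m^{-\frac{p}{1-p}}V(m+1)^{-1}$. These are the discrete analogue of $\int_0^t (s/V(s))^{p'}v(s)\,ds$, and they give $C_1,C_2$ when $p\le q$ and $C_4,C_5$ when $q<p$.
\item The terms involving the tail $\sum_{m\ge i}$ give $C_6$.
\item For $p\le1$ one uses the simpler conditions $\sup_{i\le k} iV(i)^{-1/p}$, giving $C_8,C_9$ and $C_{10}$.
\end{itemize}
The supremal part, again restricted to nonincreasing sequences, yields $C_3$ or $C_7$ (and $C_{11}$ for $p\le1$). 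The term $C_0$ comes from constant sequences $x\equiv c$, which are admissible. It is needed to handle $V(\infty)<\infty$, where $\sum_i i^{\gamma q/n}w(i)$ must be finite. Necessity of each $C_j$ is inherited from the cited characterizations, via the equivalence above, so only the identification of the constants needs care.

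I expect the main obstacle to be matching the discrete characterizations exactly. Index shifts such as $v(m+1)$, $V(m+1)$ and the strict versus non-strict supremum $m<k$ versus $m\le k$ must be reconciled up to constants depending only on $p,q$. I would handle this by using the monotonicity of $x$ to compare $x(m)$ with $x(m+1)$ on dyadic blocks, and by checking that the supremum-splitting equivalence above loses only constants depending on $n,\gamma$. The final statement $\widetilde C\approx\max\{\cdots\}$ then follows by combining these equivalences with Lemma~\ref{th6}.
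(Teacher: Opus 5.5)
Your first step, reducing to \eqref{eq4} via Lemma~\ref{th6}, is the paper's. After that the paper does not decompose anything. It substitutes $u(i)=i^{\frac{\gamma}{n}-1}$, $a=w$, $b=v$ into \cite[Theorem~2.3]{UGBK}, which characterizes the whole operator $\sup_{k\ge m}u(k)\sum_{i\le k}x(i)$ on nonincreasing sequences. It then computes $\sup_{i\le j\le k}u(j)j=k^{\gamma/n}$ and discards the conditions that become dominated in cases (iii) and (iv). Your splitting of $Tx(m)$ into a Hardy piece plus a supremal piece is a legitimate alternative, but your justification is wrong. The quantity $k^{\gamma/n}x(m+1)$ is not controlled by $\sup_{j\ge m}j^{\gamma/n}x(j)$, since $x(m+1)\ge x(j)$. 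Use instead $x(i)\le i^{-\gamma/n}\sup_{j\ge m}j^{\gamma/n}x(j)$ for $i\ge m$, together with $\sum_{i\le k}i^{-\gamma/n}\le\frac{n}{n-\gamma}k^{1-\gamma/n}$.

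The genuine gap is the step where you ``read off'' the constants. Your assignment is wrong, and the two pieces do not produce the stated list.

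The Hardy piece is the cone inequality for $\sum_{i\le m}x(i)$ with weight $\tilde w(m)=m^{q(\frac{\gamma}{n}-1)}w(m)$. For $1<p\le q$ its Sawyer-type conditions are, up to equivalence, $\sup_k\big(\sum_{i\le k}i^{q}\tilde w(i)\big)^{1/q}V(k)^{-1/p}=C_3$ and $C_2$. Constants give only $C_0\le C_3$. It never yields $C_1$, which contains the unweighted $W(k)$.

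The supremal piece, tested on $x=\chi_{\{1,\dots,k\}}$, requires
\[
C_\ast:=\sup_{k\in\mathbb{N}} k^{\gamma/n}W(k)^{1/q}V(k)^{-1/p}<\infty ,
\]
which is not on the list. On constant sequences it equals $+\infty$ when $\gamma>0$. So constants do not produce $C_0$; they show that \eqref{eq4} fails outright if $V(\infty)<\infty$ and $w\not\equiv 0$.

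What your route actually needs is a comparison lemma between $\{C_2,C_3,C_\ast\}$ and $\{C_0,C_1,C_2,C_3\}$, and its analogues in cases (ii)--(iv). This is not index bookkeeping, and in case (i) it is false. Take $\gamma>0$, $p=q=2$, $v=w=(1,0,0,\dots)$. Then $C_0=C_3=1$ and $C_1=C_2=0$. Yet $y=\chi_{S_{0,R}}$ has $\|y\|_{\lambda_2(v)}=1$ and $(M_\gamma y)^*(1)\ge(2R+1)^{\gamma}\to\infty$. So case (i) as stated fails when $V(\infty)<\infty$ and $\gamma>0$. This is a defect of the statement, and the paper's substitution does not survive this example either. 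Your decomposition detects the failure through $C_\ast$, but it cannot end at the stated list.

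When $V(\infty)=\infty$ the comparison can be proved. For $\gamma>0$ one gets $C_\ast\lesssim C_1$: choose $K\ge k$ minimal with $V(K+1)\ge 2V(k)$ and sum by parts in $\sum_{m=k}^{K}m^{\frac{p}{p-1}}v(m+1)$. For $\gamma=0$ one simply has $C_\ast=C_3$. Your plan is missing that lemma, plus an extra condition or hypothesis covering $V(\infty)<\infty$.
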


\begin{proof}[{\bf Proof of Theorem~\ref{thmmain}.}]
The proof easily follows from Lemma~\ref{th6} by using \cite[Theorem~2.3]{UGBK}, with   $u(i)=i^{\frac{\gamma}{n}-1} $, $a(m)=w(m)$ and $b(m)=v(m)$.
It is easily seen that $z(k)=k^{\frac{\gamma}{n}-1}$ and  $\sup_{i\le j\le n}u(j)j=n^{\frac{\gamma}{n}}$. Therefore, in the cases (iii) and (iv), we have one condition less than in \cite[Theorem 2.3]{UGBK} because in case (iii) we have 
$C \approx C_8+C_9+C_{10}$ and $C_{10}\le C_8$. In case (iv)  we have $C \approx C_{11}+C_{12}+C_{13}$,  $C_{13}\le C_{12}$, therefor $C \approx C_{11}+C_{12}$, $C_{11}$ and $C_{12}$ are now $C_{10}$ and $C_{11}$.  

\end{proof}

We note that weighted inequalities for Hardy operators and for other operators have also been considered in the literature \cite{GKOP}-\cite{S}.

\section{Acknowledgements}
The research was supported by the grant Ministry of Education and Science of the Republic of Kazakhstan (project no. AP26196065).The research of  A.~Gogatishvili was partially supported by the grant project 23-04720S of the Czech Science Foundation (GA\v{C}R),  The Institute of Mathematics, CAS is supported by RVO:67985840, by  Shota Rustaveli National Science Foundation (SRNSF), grant no: FR22-17770.

\end{document}